\numberwithin{equation}{section}
\newcommand{\N}{\mathbb{N}}
\newcommand{\Z}{\mathbb{Z}}
\newcommand{\Q}{\mathbb{Q}}
\newcommand{\R}{\mathbb{R}}
\newcommand{\PR}{\mathbb{P}}
\newcommand{\Za}[1]{\mathrm{Z}_{\alpha} \left( {#1} \right) }
\newtheorem{Theo}{Theorem}[section]
\newtheorem{Lem}{Lemma}[section]
\newtheorem{Prop}{Proposition}[section]
\newtheorem{Rem}{Remark}[section]
\def\al{\alpha}
\newcommand{\supp}{\mathrm{supp}}
\def\al{\alpha}
\title{Linear fractional stable motion: a wavelet estimator of the $\al$ parameter}
\author{Antoine Ayache \\UMR CNRS 8524, Laboratoire Paul Painlev\'e, B\^at. M2\\
  Universit\'e Lille 1\\ 59655 Villeneuve d'Ascq Cedex, France\\
E-mail: \texttt{Antoine.Ayache@math.univ-lille1.fr}\\
\ 
\and
 Julien Hamonier\\
  FR CNRS 2956,  LAMAV, \\
  Institut des Sciences et Techniques de Valenciennes,\\
  Universit\'e de Valenciennes et du Hainaut Cambr\'esis, \\
  F-59313 - Valenciennes Cedex 9, France\\
E-mail: \texttt{Julien.Hamonier@univ-valenciennes.fr}
}
\date{}
\begin{document}
\maketitle
\begin{abstract}
Linear fractional stable motion, denoted by $\{X_{H,\al}(t)\}_{t\in \R}$, is one of the most classical stable processes; it depends on two 
parameters $H\in (0,1)$ and $\al\in (0,2)$. The parameter $H$ characterizes the self-similarity property of $\{X_{H,\al}(t)\}_{t\in \R}$
while the parameter $\al$ governs the tail heaviness of its finite dimensional distributions; throughout our article we assume that the latter distributions 
are symmetric, that
$H>1/\al$ and that $H$ is known. We show that, on the interval $[0,1]$, the asymptotic behaviour of the maximum, at a given scale $j$, of absolute values of the wavelet coefficients of $\{X_{H,\al}(t)\}_{t\in \R}$, is of the same order as $2^{-j(H-1/\al)}$; then we derive from this result a strongly consistent (i.e. almost surely 
convergent) statistical estimator for the parameter~$\al$.
\end{abstract}

\medskip

{\it Key words:} stable stochastic processes; statistical inference; wavelet coefficients; H\"older regularity.

\section{Introduction and statement of the main results}

Let $H$ and $\al$ be two parameters such that $\al\in (1,2)$ and $1/\al <H <1$. We denote by $\{X_{H,\al}(t)\}_{t\in \R}$ the symmetric $\al$ stable linear fractional stable motion (lfsm for brevity) (see e.g. \cite{SamTaq,EmMa}), defined, for
all $t\in\R$, as, 
\begin{equation}
X_{H,\al}(t):=\int_{\R} \Big\{ (t-s)_+^{H-1/\al} - (-s)_+^{H-1/\al} \Big\} \Za{ds},
\end{equation}
where $\Za{\cdot}$ is a symmetric $\al$-stable random measure and, for each $z\in\R$
\begin{equation}
\label{eq:pospart}
(z)_+:=\max\{z,0\}. 
\end{equation}
The parameter $H$ characterizes the
self-similarity property of lfsm; namely, for all fixed positive real-number
$a$, the processes $\{X(at)\}_{t\in\R}$ and $\{a^H X(t)\}_{t\in\R}$ have the
same  finite dimensional distributions. The parameter $\al$ governs the tail heaviness
of the latter distributions. The
process $\{X_{H,\al}(t)\}_{t\in \R}$ has a modification with continuous
nowhere differentiable sample
paths; it is identified with this modification in all the sequel. 

The statistical problem of the
estimation of $H$ has already been studied in several articles: 
\cite{SPT2002, abry1999estimation, stoev2005asymptotic, pipiras2007bounds}, and strongly consistent estimators
(i.e. convergent almost surely), based
on $(d_{j,k})_{(j,k)\in\Z^2}$, the discrete wavelet transform of lfsm, have
been proposed; notice that the latter estimators of $H$ do not require that 
$\al$ to be known. Throughout our paper, for all $(j,k)\in\Z^2$, the wavelet coefficient $d_{j,k}$ is
defined as,
\begin{equation}\label{djk}
d_{j,k}=2^j \int_{\R} X_{H,\al}(t) \psi(2^jt-k) dt;
\end{equation}
moreover, we only impose to the analyzing wavelet $\psi$ a
very weak assumption: {\em $\psi$ is an arbitrary real-valued non-vanishing continuous function with 
a compact support in $[0,1]$ and it has 2 vanishing moments i.e.
\begin{equation}\label{moment}
\int_{\R} \psi(s) ds=\int_{\R} s\psi(s) ds=0.
\end{equation}
}
It is worth noticing that we do not need that
$\big\{2^{j/2}\psi(2^j\cdot-k):(j,k)\in\Z^2\big\}$ be an orthonormal wavelet basis for
$L^2(\R)$.

In view of the fact that the problem of the estimation of $H$ is now well
understood, from now on we assume the latter parameter to be known. Our goal is
to construct, by using the wavelet coefficients $(d_{j,k})_{0\le k <2^j}$,
a strongly consistent (i.e. almost surely convergent when $j\rightarrow +\infty$)
estimator $\widehat{\al}_j$ of the parameter $\al$. Let us outline the main
ideas which lead to this estimator. 
\begin{itemize}
\item The starting point, is a result of \cite{Tak89}, according to which, with probability $1$, the quantity
$H-1/\al$, is the critical uniform H\"older exponent of the sample
paths of $X_{H,\al}$ over any arbitrary compact interval and in particular the
interval $[0,1]$; more precisely, one has, almost surely for all arbitrarily
small $\eta>0$,
\begin{equation}
\label{eq:modcont}
\sup_{t_1,t_2\in [0,1]}
\left\{\frac{\big|X_{H,\al}(t_1)-X_{H,\al}(t_2)\big|}{|t_1-t_2|^{H-1/\al-\eta}}\right\}<\infty
\end{equation}
and
\begin{equation}
\label{eq:optmodcont}
\sup_{t_1,t_2\in [0,1]} \left\{\frac{\big|X_{H,\al}(t_1)-X_{H,\al}(t_2)\big|}
{|t_1-t_2|^{H-1/\al+\eta}}\right\}=\infty.
\end{equation}
\item Next, let us set,
\begin{equation}
\label{eq:wavc-D}
D_j=\max_{0\le k <2^j}|d_{j,k}|.
\end{equation}
In view of the fact that the wavelet $\psi$ has a first vanishing moment, one
can derive from (\ref{eq:modcont}), that, almost surely, for all arbitrarily
small $\epsilon>0$,
\begin{equation}
\label{eq:mainub}
\limsup_{j\rightarrow +\infty}\left\{ 2^{j(H-1/\al-\epsilon)}D_j \right\}<\infty.
\end{equation}
\item Notice that, since we do not impose to $\psi$ to be a continuously differentiable function and to
  $\big\{2^{j/2}\psi(2^j\cdot-k):(j,k)\in\Z^2\big\}$ to form an orthonormal wavelet basis for
$L^2(\R)$, a priori it is not at all clear that (\ref{eq:optmodcont}), implies that, almost
surely for all arbitrarily
small $\epsilon>0$,
\begin{equation}
\label{eq:lb1}
\limsup_{j\rightarrow +\infty}\left\{ 2^{j(H-1/\al+\epsilon)}D_j\right\}=\infty.
\end{equation}
Yet, by making use of some specific properties of lfsm as well as the fact
that $\psi$ is compactly supported, we will be able to show that, a result
stronger than (\ref{eq:lb1}) holds; namely, one has almost surely, for all
arbitrarily small
$\epsilon>0$,
\begin{equation}
\label{eq:mainlb}
\liminf_{j\rightarrow +\infty}\left\{2^{j(H-1/\al+\epsilon)}D_j\right\}=\infty.
\end{equation}
\item Finally, combining (\ref{eq:mainub}) with (\ref{eq:mainlb}), one can
  get the following theorem, which is our main result.
\end{itemize}
\begin{Theo}
\label{Th:main}
For each  $j\in \N$, one set,
$$
\frac{1}{\widehat{\al}_j}=H+\frac{\log (D_j)}{j\log (2)},
$$
where $D_j$ is defined in (\ref{eq:wavc-D}). Then, one has almost surely,
$$
\widehat{\al}_j \xrightarrow[j\rightarrow+\infty]{a.s.} \al.
$$
\end{Theo}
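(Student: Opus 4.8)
The plan is to reduce the statement to the asymptotics of $\log(D_j)/j$ and then apply the two-sided control of $D_j$ furnished by (\ref{eq:mainub}) and (\ref{eq:mainlb}). Observe first that the definition of $\widehat{\al}_j$ can be rewritten as
\[
\frac{1}{\widehat{\al}_j}-\frac{1}{\al}=\Big(H-\frac{1}{\al}\Big)+\frac{\log (D_j)}{j\log (2)},
\]
so that, setting $\beta:=H-1/\al>0$ (recall $H>1/\al$), proving $1/\widehat{\al}_j\to 1/\al$ amounts to showing that $j^{-1}\log_2(D_j)$ converges almost surely to $-\beta$. Since $y\mapsto 1/y$ is continuous and nonvanishing near $1/\al\in(1/2,1)$, the convergence $\widehat{\al}_j\to\al$ will then follow immediately from $1/\widehat{\al}_j\to 1/\al$.

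Next I would extract the two matching one-sided estimates. Fix an arbitrarily small $\eps>0$. From (\ref{eq:mainub}), almost surely $2^{j(\beta-\eps)}D_j$ stays bounded, say by a finite random constant $C$, for all large $j$; taking logarithms in base $2$ and dividing by $j$ gives $j^{-1}\log_2(D_j)\le j^{-1}\log_2 C-(\beta-\eps)$, whence
\[
\limsup_{j\rightarrow+\infty}\frac{\log_2(D_j)}{j}\le-\beta+\eps.
\]
Symmetrically, (\ref{eq:mainlb}) asserts that $2^{j(\beta+\eps)}D_j\to+\infty$ almost surely, so in particular $2^{j(\beta+\eps)}D_j\ge 1$ for all large $j$; the same manipulation yields $j^{-1}\log_2(D_j)\ge-(\beta+\eps)$ eventually, hence
\[
\liminf_{j\rightarrow+\infty}\frac{\log_2(D_j)}{j}\ge-\beta-\eps.
\]

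The only point demanding a little care is the order of the quantifiers ``almost surely'' and ``for all $\eps$''. To handle it I would fix once and for all a countable sequence $\eps_n\downarrow 0$, apply (\ref{eq:mainub}) and (\ref{eq:mainlb}) for each $\eps_n$, and intersect the corresponding probability-one events; on the resulting single event of full probability the two displayed inequalities hold simultaneously for every $n$. Letting $n\rightarrow+\infty$ then squeezes both the $\limsup$ and the $\liminf$ of $j^{-1}\log_2(D_j)$ to the common value $-\beta$, so this sequence converges almost surely to $-\beta$, and the reduction of the first paragraph concludes the argument. I expect no genuine obstacle at this stage: all the substance is already contained in the bounds (\ref{eq:mainub}) and (\ref{eq:mainlb}), and what remains is the elementary logarithmic combination together with this routine countable-intersection step.
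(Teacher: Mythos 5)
Your proposal is correct and follows essentially the same route as the paper: both deduce from (\ref{eq:mainub}) and (\ref{eq:mainlb}) the two-sided bounds on $D_j$, take logarithms and divide by $j$, and let $\epsilon\rightarrow 0$ to conclude that $\frac{\log(D_j)}{j\log(2)}\rightarrow -H+1/\al$ almost surely. Your explicit countable-intersection step over $\eps_n\downarrow 0$ is a slightly more careful rendering of the quantifier handling that the paper subsumes in the way (\ref{eq:mainub}), (\ref{eq:mainlb}) and Remark~\ref{rem:unif-event} are stated, but the substance is identical.
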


\section{Proofs}

\subsection{Proof of Relation (\ref{eq:mainub})}

The proof is standard in the wavelet setting, we give it for the sake of completeness. Let $\check{\Omega}$ be an event of probability 1 on which Relation
(\ref{eq:modcont}) holds and let $\omega\in\check{\Omega}$ be arbitrary and
fixed. Assume that $\epsilon>0$ is arbitrary and fixed and denote by $C(\omega)$ the finite quantity defined as, 
\begin{equation}
\label{eq:defC}
C(\omega):=\sup_{t_1,t_2\in [0,1]}\left\{\frac{\big|X_{H,\al}(t_1,\omega)-X_{H,\al}(t_2,\omega)\big|}{|t_1-t_2|^{H-1/\al-\epsilon}}\right\}.
\end{equation}

On the other hand, notice that (\ref{djk}), (\ref{moment}) and the 
fact that,
\begin{equation}
\label{eq:supp}
\supp \, \psi \subseteq [0,1],
\end{equation}
 imply that, for all
$(j,k)\in\Z_+\times\Z_+$ satisfying $0\le k < 2^j$, one has,
\begin{equation}
\label{eq:ub3}
d_{j,k}(\omega)= 2^j \int_{k2^{-j}}^{(k+1)2^{-j}} \Big\{ X_{H,\al}(t,\omega)-X_{H,\al}(k2^{-j},\omega) \Big\}\psi(2^jt-k) dt.
\end{equation}
Next, combining (\ref{eq:ub3}) with (\ref{eq:defC}), one gets
\begin{eqnarray*}
 |d_{j,k}(\omega)|&\leq &  2^j\int_{k2^{-j}}^{(k+1)2^{-j}} \Big| X_{H,\al}(t,\omega)-X_{H,\al}(k2^{-j},\omega) \Big| \big|\psi(2^jt-k)\big| dt\nonumber\\
&\le & \|\psi\|_{L^\infty (\R)}C(\omega) 2^j\int_{k2^{-j}}^{(k+1)2^{-j}} \big
|t-k2^{-j}\big|^{H-1/\al-\epsilon}dt\nonumber\\
&\le & \|\psi\|_{L^\infty (\R)}C(\omega)  2^{-j(H-1/\al-\epsilon)},
\end{eqnarray*}
which proves that (\ref{eq:mainub}) is satisfied. $\Box$

\subsection{Proof of Relation (\ref{eq:mainlb})}

Let us first recall that in \cite{delbeke2000stochastic}, a nice stochastic
integral representation of the wavelet coefficients $d_{j,k}$ has been
obtained, namely one has almost surely that
\begin{equation}
\label{eq:lb}
d_{j,k}  = 2^{-j(H-1/\al)} \int_{\R} \Phi_{H,\al}(2^js-k) \Za{ds},
\end{equation}
where $\Phi_{H,\al}$ is the real-valued continuous function defined for each $x\in\R$, as,
\begin{equation}\label{PhiH}
\Phi_{H,\al}(x)=\int_{\R} (y-x)_+^{H-1/\al} \psi(y) dy=\int_{0}^1 (y-x)_+^{H-1/\al} \psi(y) dy;
\end{equation}
notice that the last equality results from (\ref{eq:supp}).

\begin{Prop}\label{localisation}
The function $\Phi_{H,\al}$ satisfies the following two nice properties:
\begin{itemize}
\item[(i)] one has,
\begin{equation}
\label{eq:suppPhiH}
\supp\, \Phi_{H,\al} \subseteq (-\infty,1];
\end{equation}
\item[(ii)] there is a constant $c_1>0$ such for all $x\in (-\infty,1]$,
\begin{equation}
\label{ineg:loc}
\big|\Phi_{H,\al}(x)\big|\le c_1 \big(1+|x|\big)^{-(2+1/\al-H)}.
\end{equation}
\end{itemize}
\end{Prop}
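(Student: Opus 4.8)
The plan is to treat the two assertions separately, with part (i) essentially free and part (ii) carrying all the work. For part (i), I would argue directly from the support condition (\ref{eq:supp}): if $x>1$, then for every $y$ in $\supp\,\psi\subseteq[0,1]$ one has $y-x<0$, whence $(y-x)_+^{H-1/\al}=0$; plugging this into (\ref{PhiH}) gives $\Phi_{H,\al}(x)=0$ for all $x>1$, which is exactly (\ref{eq:suppPhiH}).

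For part (ii) I would split the half-line $(-\infty,1]$ into the compact piece $[-1,1]$ and the genuine tail $(-\infty,-1)$. On $[-1,1]$ there is nothing to prove beyond a compactness remark: $\Phi_{H,\al}$ is continuous, hence bounded on this compact set, while the factor $(1+|x|)^{-(2+1/\al-H)}$ is bounded below there by the positive constant $2^{-(2+1/\al-H)}$; so (\ref{ineg:loc}) holds on $[-1,1]$ after enlarging $c_1$. The substance of the proposition lies entirely in the tail.

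For $x<-1$ and $y\in[0,1]$ one has $y-x\ge |x|>1>0$, so the positive part can be dropped and the map $y\mapsto (y-x)^{H-1/\al}$ is smooth on $[0,1]$, staying uniformly away from its singularity at $y=x$. Writing $\beta:=H-1/\al\in(0,1)$, I would Taylor-expand this map to second order about $y=0$ with Lagrange remainder and integrate against $\psi$. Here the two vanishing moments (\ref{moment}) are decisive: they annihilate the constant and the linear Taylor terms, leaving only the quadratic remainder, so that
\begin{equation*}
\Phi_{H,\al}(x)=\frac{1}{2}\int_0^1 \beta(\beta-1)(\xi_y-x)^{\beta-2}\,y^2\,\psi(y)\,dy,
\end{equation*}
for suitable $\xi_y\in(0,y)$. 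Since $\beta-2<0$ and $\xi_y-x\ge|x|$, the factor $(\xi_y-x)^{\beta-2}$ is bounded above by $|x|^{\beta-2}=|x|^{-(2+1/\al-H)}$, and the remaining integral $\int_0^1 y^2|\psi(y)|\,dy$ is a finite constant. This yields $|\Phi_{H,\al}(x)|\le C\,|x|^{-(2+1/\al-H)}$ on the tail, and comparing $|x|$ with $1+|x|$ (using $1+|x|\le 2|x|$ for $|x|>1$) converts this into the stated bound (\ref{ineg:loc}).

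The main point to get right, and the only conceptual step, is recognizing that the decay exponent $2+1/\al-H$ is precisely dictated by the two vanishing moments: killing the zeroth and first order Taylor coefficients forces the estimate down to the second derivative of $(y-x)^{\beta}$, which carries the exponent $\beta-2$. Everything else is routine because on $(-\infty,-1)$ the base $y-x$ remains bounded below by $1$, so the Taylor expansion is legitimate and its remainder is uniformly controlled; no delicate handling of the singularity of $(\cdot)_+^{\beta}$ is needed once the tail has been isolated from the compact region.
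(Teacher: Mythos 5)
Your proof is correct; part (i) is the same immediate argument as in the paper, and part (ii) uses the same decomposition of $(-\infty,1]$ into $[-1,1]$ and the tail, but the key tail estimate is obtained by a genuinely different computation. The paper integrates by parts twice in (\ref{PhiH}): the vanishing moments (\ref{moment}) force the successive primitives $\psi^{(-1)}$ and $\psi^{(-2)}$ to be compactly supported in $[0,1]$, which gives, for $x<-1$, the exact formula
\begin{equation*}
\Phi_{H,\al}(x)=(H-1/\al)(H-1/\al-1)\int_{0}^{1}(y-x)^{H-1/\al-2}\,\psi^{(-2)}(y)\,dy,
\end{equation*}
and the bound then follows from $y-x\ge 2^{-1}\big(1+|x|\big)$. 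You instead Taylor-expand the kernel $y\mapsto (y-x)^{H-1/\al}$ to second order around $y=0$ and let the two moment conditions annihilate the constant and linear terms; this is the dual mechanism (moving two derivatives onto the kernel rather than two antiderivatives onto $\psi$), and it necessarily produces the same exponent $H-1/\al-2$. Your route is slightly more self-contained, since nothing needs to be checked about supports of primitives, while the paper's route yields an exact representation of $\Phi_{H,\al}$ on the tail rather than only an upper bound. Your soft continuity-plus-compactness bound on $[-1,1]$ is likewise a valid replacement for the paper's explicit constant $4\|\psi\|_{L^\infty(\R)}$, because $\Phi_{H,\al}$ is indeed continuous (e.g. $|(u)_+^{\beta}-(v)_+^{\beta}|\le |u-v|^{\beta}$ with $\beta=H-1/\al\in(0,1)$). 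One presentational caveat: the Lagrange point $\xi_y$ depends on $y$, so your displayed identity should be read as saying that the integrand equals the continuous Taylor remainder $\big\{(y-x)^{\beta}-(-x)^{\beta}-\beta(-x)^{\beta-1}y\big\}\psi(y)$; since you only use the pointwise bound $(\xi_y-x)^{\beta-2}\le |x|^{\beta-2}$, this costs nothing, and using the integral form of the remainder would sidestep the issue entirely.
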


\begin{proof}[Proof of Proposition~\ref{localisation}] Part~$(i)$ is a straightforward consequence of (\ref{PhiH}) and (\ref{eq:pospart}). Let us 
show that Part~$(ii)$ holds. First observe that, (\ref{PhiH}) easily implies that,
\begin{equation}
\label{eq0:localisation}
\sup_{x\in [-1,1]}\left\{\big(1+|x|\big)^{2+1/\al-H}\big|\Phi_{H,\al}(x)\big|\right\}\le 4\|\psi\|_{L^\infty (\R)}<\infty.
\end{equation}
Let us now suppose that $x<-1$. We denote by $\psi^{(-1)}$ the primitive of $\psi$,
defined for all $z\in\R$, as
$$
\psi^{(-1)}(z)=\int_{-\infty}^z \psi(y) dy.
$$ 
Observe that (\ref{eq:supp}) and (\ref{moment}) entail that the continuous function
$\psi^{(-1)}$ has a compact support included in $[0,1]$. We denote by $\psi^{(-2)}$ the primitive of $\psi^{(-1)}$,
defined for all $z\in\R$, as
$$
\psi^{(-2)}(z)=\int_{-\infty}^z \psi^{(-1)}(y) dy.
$$ 
Observe that $\mbox{supp}\, \psi^{(-1)}\subseteq [0,1]$ and (\ref{moment}) entail that the continuous function
$\psi^{(-2)}$ has a compact support included in $[0,1]$; therefore integrating two times by parts in (\ref{PhiH}), we obtain
\begin{equation}
\label{eq2:localisation}
\Phi_{H,\al}(x)=(H-1/\al)(H-1/\al-1)\int_{0}^1 (y-x)^{H-1/\al-2}\psi^{(-2)}(y)dy.
\end{equation}
Next, using (\ref{eq2:localisation}) and the inequalities: for all
$y\in [0,1]$, $y-x\ge |x|\ge 2^{-1}\big (1+|x|\big)$,  it follows that,
\begin{equation}
\label{eq1:localisation}
\big|\Phi_{H,\al}(x)\big|\le 2^{2+1/\al-H}\|\psi^{(-2)}\|_{L^\infty(\R)} \big(1+|x|\big)^{H-1/\al-2}.
\end{equation}
Finally, combining (\ref{eq0:localisation}) with (\ref{eq1:localisation}), we get Part~$(ii)$ of the proposition.
\end{proof}
\noindent
A straightforward consequence of (\ref{eq:lb}) and Part~$(i)$ of Proposition~\ref{localisation}, is that,
\begin{equation}
\label{eq:decomp-djk}
d_{j,k}=2^{-j(H-1/\al)} \int_{-\infty}^{(k+1)2^{-j}} \Phi_{H,\al}(2^js-k) \Za(ds).
\end{equation}

Let us now introduce some additional notations. 
We assume that $\delta \in (0,1/3)$ is arbitrary and fixed. For all $j\in\Z_+$,
we define the positive integer $e_j$ as,
\begin{equation}
\label{eq:add1}
e_j:=[2^{j\delta}],
\end{equation}
where $[\cdot]$ is the integer part function. Then, for any integer $l$ such
that 
\begin{equation}
\label{eq:inegl}
0\leq l \le [2^{j(1-\delta)}]-1,
\end{equation} 
we set
\begin{equation}\label{def:Gjl}
G_{j,le_j}:= \int_{((l-1)e_j+1)2^{-j}}^{(le_j+1)2^{-j}} \Phi_{H,\al}(2^js-le_j) \Za(ds),
\end{equation}
and
\begin{equation}\label{def:Rjl}
R_{j,le_j}:=\int_{-\infty}^{((l-1)e_j+1)2^{-j}} \Phi_{H,\al}(2^js-le_j) \Za(ds).
\end{equation}
Thus, in view of (\ref{eq:decomp-djk}), the wavelet coefficient $d_{j,le_j}$ can be expressed as,
\begin{equation}\label{rel:d:GR}
d_{j,le_j}=2^{-j(H-1/\al)} \Big(G_{j,le_j} + R_{j,le_j} \Big).
\end{equation}

Now, our goal will be to derive the following two lemmas which respectively provide lower and upper asymptotic estimates for $\max_{0\leq l < [2^{j(1-\delta)}]} |G_{j,le_j}|$ and $\max_{0\leq l < [2^{j(1-\delta)}]}|R_{j,le_j}|$.

\begin{Lem}\label{prop:minmaxgjl}
One has, almost surely
\begin{equation}
\label{eq0:minmaxgjl}
\liminf_{j\rightarrow +\infty} \left\{2^{j\frac{2\delta}{\al}} \max_{0\leq l < [2^{j(1-\delta)}]} |G_{j,le_j}|\right\} \ge 1.
\end{equation}
\end{Lem}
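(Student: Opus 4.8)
The plan is to exploit the one structural feature that the decomposition (\ref{def:Gjl}) was designed to produce: for distinct values of $l$ the stochastic integrals $G_{j,le_j}$ are taken over \emph{disjoint} intervals of the integration variable $s$. Indeed, the interval $\big(((l-1)e_j+1)2^{-j},(le_j+1)2^{-j}\big)$ attached to index $l$ meets the one attached to $l+1$ only at the single point $(le_j+1)2^{-j}$. Since $\Za{\cdot}$ is an independently scattered symmetric $\al$-stable random measure, it follows that, for each fixed $j$, the family $\{G_{j,le_j}:0\le l<[2^{j(1-\delta)}]\}$ consists of \emph{independent} $\sas$ random variables, each of which has a scale parameter that can be computed explicitly.

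The next step is to determine that scale. Recall that for a deterministic $f\in\La(\R)$ the integral $\int_{\R}f\,\Za{ds}$ is $\sas$ with scale $\big(\int_{\R}|f|^{\al}\big)^{1/\al}$. Applying this to (\ref{def:Gjl}) and performing the change of variable $u=2^js-le_j$ (which sends the $l$-interval onto $[1-e_j,1]$ for \emph{every} $l$), one gets that the scale of $G_{j,le_j}$ equals
$$
\sigma_j:=2^{-j/\al}\Big(\int_{1-e_j}^{1}\big|\Phi_{H,\al}(u)\big|^{\al}\,du\Big)^{1/\al},
$$
the same for all $l$. Because $\Phi_{H,\al}$ is supported in $(-\infty,1]$ (Part~$(i)$ of Proposition~\ref{localisation}) and decays there like $(1+|x|)^{-(2+1/\al-H)}$ (Part~$(ii)$), the exponent $\al(2+1/\al-H)=2\al+1-\al H>1$ ensures that $c_\Phi:=\int_{-\infty}^{1}|\Phi_{H,\al}(u)|^{\al}\,du$ is finite and strictly positive; hence $\sigma_j=2^{-j/\al}c_\Phi^{1/\al}\,(1+o(1))$ as $j\to+\infty$.

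The conclusion (\ref{eq0:minmaxgjl}) would then follow from a Borel--Cantelli argument. Fix $\eta\in(0,1)$ and set $\tau_j:=(1-\eta)2^{-2\delta j/\al}$ and $N_j:=[2^{j(1-\delta)}]$. Using the independence and the common law of the $G_{j,le_j}$, one obtains
$$
\PR\Big(\max_{0\le l<N_j}|G_{j,le_j}|<\tau_j\Big)=\big(1-\PR(|G_{j,0}|\ge\tau_j)\big)^{N_j}\le\exp\big(-N_j\,\PR(|G_{j,0}|\ge\tau_j)\big).
$$
Writing $|G_{j,0}|=\sigma_j|Z|$ with $Z$ a standard $\sas$ variable, one has $\tau_j/\sigma_j\sim(1-\eta)c_\Phi^{-1/\al}\,2^{j(1-2\delta)/\al}\to+\infty$ (here $1-2\delta>0$, which holds since $\delta<1/3$), so the heavy-tail lower bound $\PR(|Z|\ge x)\ge c_0\,x^{-\al}$, valid for all large $x$ and some $c_0>0$, applies and gives $\PR(|G_{j,0}|\ge\tau_j)\ge c\,2^{-j(1-2\delta)}$. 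Consequently $N_j\,\PR(|G_{j,0}|\ge\tau_j)\ge c\,2^{j(1-\delta)}2^{-j(1-2\delta)}=c\,2^{j\delta}$, so the displayed probability is at most $\exp(-c\,2^{j\delta})$, which is summable in $j$. Borel--Cantelli then yields that, almost surely, $\max_{0\le l<N_j}|G_{j,le_j}|\ge(1-\eta)2^{-2\delta j/\al}$ for all large $j$; letting $\eta\downarrow 0$ along a countable sequence gives (\ref{eq0:minmaxgjl}).

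I expect the main obstacle to lie entirely in the two quantitative inputs of the Borel--Cantelli sum, rather than in the independence. Establishing that $\sigma_j$ is genuinely independent of $l$ and behaves like $2^{-j/\al}$ crucially uses the integrability of $|\Phi_{H,\al}|^{\al}$ on $(-\infty,1]$ provided by Proposition~\ref{localisation}, and one must match exponents carefully to confirm $N_j\,\PR(|G_{j,0}|\ge\tau_j)\to+\infty$. By contrast, the independence across $l$, though it is the conceptual heart of the argument, is built into the construction (\ref{def:Gjl}) and needs only the observation that consecutive integration intervals overlap in a single point. It is worth noting that the generous gap factor $2^{2\delta j/\al}$ (rather than $2^{\delta j/\al}$, which is the true order of magnitude of the maximum) is precisely what forces the tail sum to decay like $\exp(-c\,2^{j\delta})$, leaving ample room for the estimate.
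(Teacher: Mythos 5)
Your proof is correct and follows essentially the same route as the paper's: independence of the $G_{j,le_j}$ from the disjointness of the integration intervals, the common scale $2^{-j/\al}\big(\int_{1-e_j}^{1}|\Phi_{H,\al}(u)|^{\al}\,du\big)^{1/\al}$ via the change of variable $u=2^js-le_j$, the lower tail bound for $\sas$ variables, and Borel--Cantelli applied to a bound of order $\exp(-c\,2^{j\delta})$. One small correction: the strict positivity of $c_\Phi=\int_{-\infty}^{1}|\Phi_{H,\al}(u)|^{\al}\,du$ does not follow from the decay exponent (which only yields finiteness) but from the fact that $\Phi_{H,\al}$ is a non-vanishing continuous function, itself a consequence of the assumptions on $\psi$, as the paper points out.
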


\begin{Lem}\label{prop:majmaxrjl}
One has, almost surely
\begin{equation}
\label{eq0:majmaxrjl}
\limsup_{j\rightarrow +\infty} \left\{ 2^{j\frac{2\delta}{\al}}\max_{0\leq l < [2^{j(1-\delta)}]} |R_{j,le_j}| \right\}= 0.
\end{equation}
\end{Lem}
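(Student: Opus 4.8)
The plan is to establish the almost sure statement through a Borel--Cantelli argument applied to the normalized maxima. Since $\Za{\cdot}$ is a symmetric $\al$-stable random measure with Lebesgue control measure, each $R_{j,le_j}$ defined in (\ref{def:Rjl}) is a symmetric $\al$-stable random variable whose scale parameter equals the $\La(\R)$-norm of its integrand (see e.g. \cite{SamTaq}). Computing this norm through the change of variable $u=2^js-le_j$, whose only effect is to turn the upper integration limit into $1-e_j$ \emph{independently of $l$}, shows that all the $R_{j,le_j}$ share a common scale parameter $\sigma_j$ given by
\begin{equation*}
\sigma_j^\al=2^{-j}\int_{-\infty}^{1-e_j}\big|\Phi_{H,\al}(u)\big|^\al\,du.
\end{equation*}

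First I would bound $\sigma_j$ using Part~$(ii)$ of Proposition~\ref{localisation}. On the range $u\le 1-e_j$ one has $1+|u|\ge e_j$, so (\ref{ineg:loc}) gives $|\Phi_{H,\al}(u)|^\al\le c_1^\al(1+|u|)^{-\al(2+1/\al-H)}$, and since the exponent $\beta:=\al(2+1/\al-H)=2\al+1-\al H$ satisfies $\beta>1$, the remaining tail integral is controlled by $e_j^{\,1-\beta}/(\beta-1)$. Inserting $e_j=[2^{j\delta}]$ and using $\beta-1=\al(2-H)$, this yields, for all large $j$ and uniformly in $l$,
\begin{equation*}
\sigma_j^\al\le c\,2^{-j\big(1+\al\delta(2-H)\big)},
\end{equation*}
with $c$ a finite constant.

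Next I would combine this with the classical power-law tail estimate for symmetric $\al$-stable laws: there is a constant $C_\al$ such that every symmetric $\al$-stable variable $Y$ of scale $\sigma$ satisfies $\PR\big(|Y|>\lambda\big)\le C_\al\,\sigma^\al\lambda^{-\al}$ for all $\lambda>0$. Although the integration intervals in (\ref{def:Rjl}) are nested (so the $R_{j,le_j}$ are far from independent), a crude union bound over the at most $2^{j(1-\delta)}$ indices $l$ is enough: taking $\lambda=\theta\,2^{-2j\delta/\al}$ for a fixed $\theta>0$,
\begin{equation*}
\PR\Big(2^{2j\delta/\al}\max_{0\le l<[2^{j(1-\delta)}]}|R_{j,le_j}|>\theta\Big)\le 2^{j(1-\delta)}\,C_\al\,\sigma_j^\al\,\theta^{-\al}\,2^{2j\delta}\le C_\al\,c\,\theta^{-\al}\,2^{\,j\delta(1-\al(2-H))}.
\end{equation*}

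The crux is then to verify that the exponent $\delta\big(1-\al(2-H)\big)$ is strictly negative, i.e. $\al(2-H)>1$, equivalently $H+1/\al<2$; this holds automatically under the standing assumptions $\al\in(1,2)$ and $H\in(1/\al,1)$, since then $1/\al<1$ and $H<1$. Hence the probabilities above decay geometrically in $j$ and are summable, so the Borel--Cantelli lemma gives $\limsup_j\big\{2^{2j\delta/\al}\max_l|R_{j,le_j}|\big\}\le\theta$ almost surely; intersecting over a sequence $\theta=1/n\downarrow 0$ produces (\ref{eq0:majmaxrjl}). The only genuinely delicate point is the bookkeeping of the three competing powers of $2$ --- the number $2^{j(1-\delta)}$ of terms, the normalization $2^{2j\delta}$, and the scale decay $2^{-j(1+\al\delta(2-H))}$ --- and everything hinges on the gain of two powers in the decay of $\Phi_{H,\al}$ supplied by Proposition~\ref{localisation}, which itself stems from the two vanishing moments of $\psi$.
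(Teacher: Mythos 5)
Your proof is correct and follows essentially the same route as the paper: bound the common scale parameter of the $R_{j,le_j}$ via the decay estimate of Proposition~\ref{localisation}, then combine the symmetric $\al$-stable tail bound with a union bound over the $[2^{j(1-\delta)}]$ indices and the Borel--Cantelli Lemma. The only cosmetic difference is that the paper shows the maximum eventually lies below $2^{-j(2\delta+\eta)/\al}$ (gaining an extra decaying factor $2^{-j\eta/\al}$), whereas you fix a threshold $\theta$ and then let $\theta\downarrow 0$ along a countable sequence; both devices yield the vanishing $\limsup$.
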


The proof of Lemma~\ref{prop:minmaxgjl} mainly relies on the following two results.
\begin{Lem}
\label{lem:tailZ}
(see e.g. \cite{SamTaq}) Let $Y$ be an arbitrary symmetric $\al$-stable random variable with a non-vanishing scale parameter $\|Y\|_{\al}$, then for any real number $t\geq \|Y\|_{\al}$, one has,
\begin{equation}\label{eq2:minmaxgjl}
c_3 \|Y\|_{\al}^{\al} t^{-\al} \leq \PR(|Y|>t) \leq c_2 \|Y\|_{\al}^{\al} t^{-\al},
\end{equation}
where $c_2$ and $c_3$ are two positive constants only depending on $\al$.
\end{Lem}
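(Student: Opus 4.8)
The plan is to reduce the statement to the standard symmetric $\al$-stable law and then combine the classical heavy-tail asymptotics with an elementary continuity argument. First I would invoke the scaling property of stable distributions: if $Y$ is $\sas$ with scale parameter $\|Y\|_{\al}>0$, then $Y$ has the same law as $\|Y\|_{\al}\,Y_0$, where $Y_0$ is a standard $\sas$ variable with unit scale parameter. Setting $s=t/\|Y\|_{\al}$, the hypothesis $t\ge\|Y\|_{\al}$ becomes $s\ge 1$, and since $\PR(|Y|>t)=\PR(|Y_0|>s)$, the desired two-sided bound is equivalent to
\[
c_3 \le s^{\al}\,\PR(|Y_0|>s)\le c_2\qquad\text{for all } s\ge 1 .
\]
Hence it suffices to bound the single function $g(s):=s^{\al}\,\PR(|Y_0|>s)$ from above and below by positive constants depending only on $\al$ on the half-line $[1,+\infty)$.

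The key input, and the only genuinely nontrivial point, is the classical tail asymptotics of $\sas$ laws: for $\al\in(0,2)$ one has
\[
\lim_{s\rightarrow+\infty} s^{\al}\,\PR(|Y_0|>s)=C_{\al},
\]
for some finite constant $C_{\al}>0$ depending only on $\al$. This is exactly the content of the cited result in \cite{SamTaq}. If one wished to prove it from scratch, the standard route is through the L\'evy--Khintchine representation, writing the characteristic function of $Y_0$ as $\exp(-|\xi|^{\al})$ and relating the tail of the distribution to the tail $c|x|^{-1-\al}$ of the associated L\'evy measure, the point being that for such heavy-tailed infinitely divisible laws the distribution tail is asymptotically equivalent to the tail of the L\'evy measure. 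I expect this asymptotic statement to be the main obstacle; every remaining step is elementary.

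Granting this limit, the uniform bounds follow immediately. The function $g$ is continuous on $[1,+\infty)$, since $Y_0$ admits a continuous density whose support is all of $\R$, so $s\mapsto\PR(|Y_0|>s)$ is continuous and strictly positive for every finite $s$; in particular $g(s)>0$ throughout $[1,+\infty)$. Because $g(s)\to C_{\al}\in(0,+\infty)$ as $s\to+\infty$, I would choose $M\ge 1$ with $C_{\al}/2\le g(s)\le 2C_{\al}$ for all $s\ge M$, and then use that the continuous positive function $g$ attains a finite maximum and a strictly positive minimum on the compact interval $[1,M]$. Taking $c_2:=\max\big(2C_{\al},\,\sup_{[1,M]}g\big)$ and $c_3:=\min\big(C_{\al}/2,\,\inf_{[1,M]}g\big)$ produces two constants depending only on $\al$ such that $c_3\le g(s)\le c_2$ for all $s\ge 1$. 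Undoing the rescaling then yields (\ref{eq2:minmaxgjl}) and completes the argument.
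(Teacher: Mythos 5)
Your argument is correct, but note that the paper itself offers no proof of this lemma: it is stated as a quotation from Samorodnitsky and Taqqu \cite{SamTaq}, whose underlying result is the tail asymptotic $\lim_{s\rightarrow+\infty}s^{\al}\,\PR(|Y_0|>s)=C_{\al}\in(0,+\infty)$ for a standard symmetric $\al$-stable variable $Y_0$. What you supply is exactly the (routine but genuinely needed) bridge from that limit statement to the uniform two-sided bound asserted in the lemma: the scaling identity $Y\stackrel{d}{=}\|Y\|_{\al}Y_0$ reduces everything to $Y_0$ and makes the constants depend only on $\al$ rather than on the scale parameter, and the continuity--positivity--compactness argument on $[1,M]$ converts the asymptotic into bounds valid for every $s\ge 1$. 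Two details in your write-up deserve emphasis, because they are where the hypotheses enter. First, the restriction $t\ge\|Y\|_{\al}$, i.e. $s\ge 1$, is essential for the lower bound: the function $g(s)=s^{\al}\,\PR(|Y_0|>s)$ tends to $0$ as $s\rightarrow 0^{+}$, so no constant $c_3>0$ could work on all of $(0,+\infty)$. Second, the strict positivity of $g$ on compact sets uses that the symmetric stable density is positive on all of $\R$; this is where symmetry is genuinely used (for totally skewed stable laws with $\al<1$ the support is a half-line, and the corresponding claim would fail on one side). Modulo the classical asymptotic --- which is the same external input the paper invokes via its citation --- your proof is complete and correct.
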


\begin{Lem}\label{lem:gjlrjl}
For each fixed $j\in\Z_+$, $\{G_{j,le_j}:0\leq l \leq [2^{j(1-\delta)}]-1 \}$ is a sequence of identically distributed independent symmetric $\al$-stable random variables whose scale parameters, denoted $\|G_{j,le_j}\|_{\al}$, satisfy for all $l$, 
\begin{equation}
\label{eq1:lem:gjlrjl}
\|G_{j,le_j}\|_{\al}^{\al} =  2^{-j}\int_{1-e_j}^{1} |\Phi_{H,\al}(x)|^{\al} dx.
\end{equation}
\end{Lem}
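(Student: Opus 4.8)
The plan is to rely on two standard facts about integrals of deterministic functions against a symmetric $\al$-stable random measure with Lebesgue control measure (see e.g. \cite{SamTaq}): first, for any $f\in \La(\R)$, the integral $\int_{\R} f(s)\Za{ds}$ is a symmetric $\al$-stable random variable whose scale parameter equals $\big(\int_\R |f(s)|^\al\,ds\big)^{1/\al}$; second, finitely many such integrals $\int_\R f_1\,\Za{ds},\dots,\int_\R f_n\,\Za{ds}$ are mutually independent as soon as the functions $f_1,\dots,f_n$ have pairwise disjoint supports, up to Lebesgue-null sets. Since, by (\ref{def:Gjl}), each $G_{j,le_j}$ is exactly an integral of this type, with integrand the truncated function
$$
f_{j,l}(s):=\Phi_{H,\al}(2^js-le_j)\,\one_{[((l-1)e_j+1)2^{-j},\,(le_j+1)2^{-j}]}(s),
$$
which is bounded and compactly supported, hence trivially in $\La(\R)$, the symmetric $\al$-stability of each $G_{j,le_j}$ is immediate, and it only remains to compute the scale parameters and to verify the disjointness of the supports of the $f_{j,l}$.

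For the scale parameter, I would perform in (\ref{def:Gjl}) the change of variable $x=2^js-le_j$, whence $ds=2^{-j}\,dx$; the lower limit $s=((l-1)e_j+1)2^{-j}$ is sent to $x=1-e_j$ and the upper limit $s=(le_j+1)2^{-j}$ to $x=1$, so that
$$
\|G_{j,le_j}\|_{\al}^{\al}=\int_{((l-1)e_j+1)2^{-j}}^{(le_j+1)2^{-j}}\big|\Phi_{H,\al}(2^js-le_j)\big|^{\al}\,ds
=2^{-j}\int_{1-e_j}^{1}\big|\Phi_{H,\al}(x)\big|^{\al}\,dx,
$$
which is precisely (\ref{eq1:lem:gjlrjl}). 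Crucially, the right-hand side does not depend on $l$; since a symmetric $\al$-stable law is determined by its index $\al$ together with its scale parameter, this shows at once that the $G_{j,le_j}$, $0\le l\le [2^{j(1-\delta)}]-1$, are identically distributed.

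For independence, I would examine the supports of the integrands. By construction $f_{j,l}$ vanishes outside the interval $I_l:=[((l-1)e_j+1)2^{-j},\,(le_j+1)2^{-j}]$, of length $e_j2^{-j}$, whose right endpoint $(le_j+1)2^{-j}$ coincides exactly with the left endpoint of $I_{l+1}$, while for $l'>l$ the left endpoint $((l'-1)e_j+1)2^{-j}$ of $I_{l'}$ is always at least the right endpoint $(le_j+1)2^{-j}$ of $I_l$. Hence distinct intervals $I_l$ overlap only at isolated points, so the supports of the $f_{j,l}$ are pairwise disjoint up to Lebesgue-null sets; the independence criterion quoted above then yields the mutual independence of the family $\{G_{j,le_j}:0\le l\le [2^{j(1-\delta)}]-1\}$.

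The only genuinely delicate point is this last disjointness: the \emph{untruncated} function $s\mapsto \Phi_{H,\al}(2^js-le_j)$ is, by Part~$(i)$ of Proposition~\ref{localisation}, supported on the whole half-line $(-\infty,(le_j+1)2^{-j}]$, and these half-lines overlap heavily as $l$ varies, so nothing forces independence at that level. It is precisely the splitting (\ref{rel:d:GR}) of $d_{j,le_j}$ into the compactly supported piece $G_{j,le_j}$ and the tail $R_{j,le_j}$ that confines each integrand to the short window $I_l$; checking that these windows are non-overlapping is what makes the argument go through, and it is the step I would verify most carefully.
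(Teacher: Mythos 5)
Your proof is correct and follows essentially the same route as the paper's: the scale parameter is computed by the standard $\La$-isometry property of stable integrals followed by the change of variable $x=2^js-le_j$, identical distribution follows because the resulting expression is free of $l$, and independence comes from the integration intervals being pairwise disjoint (up to shared endpoints, which are Lebesgue-null). Your explicit verification that the windows $I_l$ abut but do not overlap is a careful spelling-out of what the paper states in one line, not a different argument.
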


\begin{proof}[Proof of Lemma \ref{lem:gjlrjl}]
The independence of these symmetric $\al$-stable random variables is a straightforward consequence of the fact that they are defined (see (\ref{def:Gjl})) through stable stochastic integrals 
over disjoint intervals. In order to show that they are identically distributed it is sufficient to prove that (\ref{eq1:lem:gjlrjl}) holds for each $l$. Using a standard property of stable stochastic integrals (see e.g. \cite{SamTaq}) and (\ref{def:Gjl}), one gets
$$
\|G_{j,le_j}\|_{\al}^{\al} = \int_{((l-1)e_j+1)2^{-j}}^{(le_j+1)2^{-j}} |\Phi_{H,\al}(2^js-le_j)|^{\al} ds;
$$
then the change of variable $u=2^js-le_j$ allows to obtain (\ref{eq1:lem:gjlrjl}).
\end{proof}

Now, we are in position to prove Lemma~\ref{prop:minmaxgjl}.

\begin{proof}[Proof of Lemma~\ref{prop:minmaxgjl}]
Let $j\in\Z_+$ be arbitrary and fixed. Using the fact that $\{G_{j,le_j}:0\leq l \leq [2^{j(1-\delta)}]-1 \}$ is a sequence of independent identically distributed random variables (see Lemma~\ref{lem:gjlrjl}), one gets, 
\begin{align}\label{eq1:minmaxgjl}
& \PR \Big( \max_{0\leq l < [2^{j(1-\delta)}]} |G_{j,le_j}| \leq 2^{-j\frac{2\delta}{\al}}
 \Big)= \prod_{l=0}^{[2^{j(1-\delta)}]-1}\PR \Big( |G_{j,le_j}| \leq 2^{-j\frac{2\delta}{\al}}\Big)\nonumber \\
& = \PR\Big(|G_{j,0}| \leq 2^{-j\frac{2\delta}{\al}} \Big)^{[2^{j(1-\delta)}]} = \bigg( 1 -\PR\Big(|G_{j,0}| > 2^{-j\frac{2\delta}{\al}}\Big)\bigg)^{[2^{j(1-\delta)}]}.
\end{align}
Observe that, in view of (\ref{eq1:lem:gjlrjl}) in which one takes $l=0$ and in view of the assumption that $\delta\in (0,1/3)$, there exist a positive constant $c_4$ and a positive integer $j_0$ such that, one has,
\begin{equation}\label{eq2bis:minmaxgjl}
2^{-j\frac{2\delta}{\al}}\ge \|G_{j,le_j}\|_{\al}=2^{-j/\al}\left(\int_{1-e_j}^{1} |\Phi_{H,\al}(x)|^{\al} dx\right)^{1/\al}\ge c_{4}^{1/\al} 2^{-j/\al},
\end{equation}
 for all integers $j$ and $l$ satisfying $j\ge j_0$ and $0\le l < [2^{j(1-\delta)}]$; notice that the last inequality in (\ref{eq2bis:minmaxgjl}), follows from the 
 fact that we have chosen $j_0$, such that for every $j\ge j_0$,
 $$
 \int_{1-e_j}^{1} |\Phi_{H,\al}(x)|^{\al} dx\ge 2^{-1}\int_{-\infty}^{1} |\Phi_{H,\al}(x)|^{\al} dx,
 $$
 and the last integral is positive since $\Phi_{H,\al}$ is a non-vanishing function (this is a consequence of our assumptions on $\psi$). Also notice that, one can suppose that $c_4\in \big(0,c_{3}^{-1}\big)$ (the positive constant $c_3$ has been introduced in Lemma~\ref{lem:tailZ}). Next, it follows from (\ref{eq1:minmaxgjl}), from the first inequality in (\ref{eq2:minmaxgjl}) in which one $t=2^{-j\frac{2\delta}{\al}}$, and from (\ref{eq2bis:minmaxgjl}), that, for all integer $j\ge j_0$,
\begin{equation}\label{eq3:minmaxgjl}
\PR \Big( \max_{0\leq l < [2^{j(1-\delta)}]} |G_{j,le_j}| \leq 2^{-j\frac{2\delta}{\al}} \Big) \leq \Big(1-c_5 2^{-j(1-2\delta)} \Big)^{[2^{j(1-\delta)}]},
\end{equation}
where the constant $c_5:=c_3c_4\in (0,1)$. Then,  (\ref{eq3:minmaxgjl}), the fact that $\delta\in (0,1/3)$, and standard computations, allow to show that, 
$$
\sum_{j= j_0}^{+\infty} \PR \Big( \max_{0\leq l < [2^{j(1-\delta)}]} |G_{j,le_j}| \leq 2^{-j\frac{2\delta}{\al}} \Big) <\infty;
$$
thus, applying the Borel-Cantelli Lemma, one gets (\ref{eq0:minmaxgjl}).
\end{proof}

The proof of Lemma~\ref{prop:majmaxrjl} mainly relies on the following result as well as on Lemma~\ref{lem:tailZ}.

\begin{Lem}\label{lem:majsclarjl}
For all non-negative integers $j$ and $l$ such that $l<[2^{j\delta}]$, the scale parameter $\|R_{j,le_j} \|_{\al}$ of the symmetric $\al$-stable random variable $R_{j,le_j}$ (see (\ref{def:Rjl})) satisfies,
\begin{equation}
\label{eq:scaleR}
\|R_{j,le_j} \|_{\al}^{\al} = 2^{-j} \int_{-\infty}^{1-e_j} |\Phi_{H,\al}(x)|^{\al} dx\leq c_6 2^{-j\al(2\delta +1/\al-\delta H)},
\end{equation}
where $c_6$ is a positive constant non depending on $j$ and $l$.
\end{Lem}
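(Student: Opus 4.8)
The plan is to obtain the exact expression for $\|R_{j,le_j}\|_\al^\al$ exactly as in Lemma~\ref{lem:gjlrjl}, and then to control the resulting integral by means of the polynomial decay of $\Phi_{H,\al}$ furnished by Part~$(ii)$ of Proposition~\ref{localisation}.

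First I would apply the standard property of stable stochastic integrals (see e.g. \cite{SamTaq}) to the representation (\ref{def:Rjl}) of $R_{j,le_j}$; since the integration there runs over $(-\infty,((l-1)e_j+1)2^{-j}]$, this gives
$$
\|R_{j,le_j}\|_\al^\al=\int_{-\infty}^{((l-1)e_j+1)2^{-j}}\big|\Phi_{H,\al}(2^js-le_j)\big|^\al\,ds.
$$
Performing the change of variable $x=2^js-le_j$, so that $ds=2^{-j}\,dx$ and the upper endpoint $s=((l-1)e_j+1)2^{-j}$ is sent to $x=(l-1)e_j+1-le_j=1-e_j$, produces precisely the first equality in (\ref{eq:scaleR}); as in Lemma~\ref{lem:gjlrjl}, the dependence on $l$ disappears after this substitution.

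Next, to derive the upper bound I would invoke (\ref{ineg:loc}). Because $e_j=[2^{j\delta}]\ge 1$ by (\ref{eq:add1}), the whole domain $(-\infty,1-e_j]$ lies inside $(-\infty,1]$, so (\ref{ineg:loc}) applies on all of it and
$$
\int_{-\infty}^{1-e_j}\big|\Phi_{H,\al}(x)\big|^\al\,dx\le c_1^\al\int_{-\infty}^{1-e_j}\big(1+|x|\big)^{-\beta}\,dx,\qquad \beta:=\al\Big(2+\tfrac1\al-H\Big)=1+\al(2-H).
$$
Since $H<2$ one has $\beta>1$, so the right-hand integral converges; computing it with $u=1+|x|=1-x$, which ranges over $[e_j,+\infty)$, yields a quantity of order $e_j^{1-\beta}/(\beta-1)$.

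Finally I would convert the power of $e_j$ into a power of $2^{-j}$: since $e_j=[2^{j\delta}]\ge 2^{j\delta-1}$ and $1-\beta<0$, one gets $e_j^{1-\beta}\le 2^{\beta-1}\,2^{j\delta(1-\beta)}$, and after multiplying by the prefactor $2^{-j}$ this gives
$$
\|R_{j,le_j}\|_\al^\al\le C\,2^{-j}\,2^{j\delta(1-\beta)}=C\,2^{-j\,(1+\delta(\beta-1))}.
$$
As $\beta-1=\al(2-H)$, the exponent equals $-j\,\big(1+\delta\al(2-H)\big)=-j\al\big(2\delta+1/\al-\delta H\big)$, which is exactly the claimed bound for an appropriate constant $c_6$. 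The only steps needing a little care are the bookkeeping that relates $e_j$ to $2^{j\delta}$ uniformly in $j$ and the check $\beta>1$ that ensures integrability near $-\infty$; the independence of the bound from $l$ is automatic once the change of variable has been made, so I do not expect any serious obstacle here.
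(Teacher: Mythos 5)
Your proposal is correct and follows essentially the same route as the paper: the equality via the scale-parameter property of stable integrals and the change of variable $x=2^js-le_j$, then the bound via the decay estimate (\ref{ineg:loc}), an explicit computation of the tail integral, and the bookkeeping relating $e_j=[2^{j\delta}]$ to a power of $2^{j\delta}$. The only cosmetic difference is that the paper restricts to $j\ge \delta^{-1}$ and uses $2^{j\delta}-1\ge 2^{j\delta-1}$, whereas you use $[x]\ge x/2$ for $x\ge 1$ directly; both yield the same exponent $-j\al(2\delta+1/\al-\delta H)$.
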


\begin{proof}[Proof of Lemma \ref{lem:majsclarjl}] The equality in (\ref{eq:scaleR}) can be obtained by using (\ref{def:Rjl}) and the arguments which have allowed to 
derive (\ref{eq1:lem:gjlrjl}). Let us show that the inequality in (\ref{eq:scaleR}) holds; there is no restriction to assume that $j\ge \delta^{-1}$. Using (\ref{ineg:loc}) and (\ref{eq:add1}), one has, 
\begin{eqnarray*}
&& 2^{-j}\int_{-\infty}^{1-e_j} |\Phi_{H,\al}(x)|^{\al} dx \le c_{1}^{\al} 2^{-j}\int_{-\infty}^{1-e_j}(1-x\big)^{-2\alpha-1+\alpha H} dx\\
&&\le c_{1}^{\al} 2^{-j}\int_{2^{j\delta}-2}^{+\infty} \big (1+x\big)^{-2\alpha-1+\alpha H} dx=c_{1}^{\al} \frac{2^{-j}\big (2^{j\delta}-1\big)^{-\alpha (2-H)}}{\alpha (2-H)}\le c_6 2^{-j\al(2\delta +1/\al-\delta H)},
\end{eqnarray*}
where the constant $$c_6:= c_{1}^{\al} \frac{2^{\alpha (2-H)}}{\alpha (2-H)}.$$
\end{proof}
\noindent
Now, we are in position to prove Lemma~\ref{prop:majmaxrjl}.

\begin{proof}[Proof of Lemma~\ref{prop:majmaxrjl}] 
First, observe that in view of the assumption that $\delta\in (0,1/3)$, one has for a fixed arbitrarily small $\eta>0$,
$$
\frac{2\delta+\eta}{\al}<2\delta+1/\al-\delta H;
$$
therefore, it follows from Lemma~\ref{lem:majsclarjl}, that there exists a positive integer $j_1$, such that for all integers $j$ and $l$, satisfying $j\ge j_1$ and $0\leq l < [2^{j(1-\delta)}]$,
one has,
$$
\|R_{j,le_j} \|_{\al}\le 2^{-j\big (\frac{2\delta+\eta}{\al}\big)}.
$$
Thus, we are allowed to apply the second inequality in (\ref{eq2:minmaxgjl}), in the case where $Y=R_{j,le_j}$ and $t=2^{-j\big(\frac{2\delta+\eta}{\al}\big)}$. As a consequence,
we obtain that, for all $j\ge j_1$,
\begin{eqnarray}
\label{eq1:majmaxrjl}
&& \PR\Big(\max_{0\leq l < [2^{j(1-\delta)}]} |R_{j,le_j}| > 2^{-j\big(\frac{2\delta+\eta}{\al}\big)} \Big) \leq  \sum_{l=0}^{[2^{j(1-\delta)}]-1}  \PR\Big(|R_{j,le_j}| > 2^{-j\big(\frac{2\delta+\eta}{\al}\big)} \Big)\nonumber\\
&& \le c_2 2^{j(2\delta+\eta)}\sum_{l=0}^{[2^{j(1-\delta)}]-1}\|R_{j,le_j} \|_{\al}^{\al}\le c_7 2^{-j\al(2\delta +1/\al-\delta H)+j(1+\delta+\eta)},
\end{eqnarray}
where the last inequality results from (\ref{eq:scaleR}) and the constant $c_7:=c_2 c_6$. Assume that $\delta (\al-1)>\eta$, then one has,
$$
\al(2\delta +1/\al-\delta H)>\al (\delta+1/\al)=\al\delta +1>1+\delta +\eta.
$$
Therefore, it follows from (\ref{eq1:majmaxrjl}) that,
$$
\sum_{j=j_1}^{+\infty} \PR\Big(\max_{0\leq l < [2^{j(1-\delta)}]} |R_{j,le_j}| > 2^{-j\big(\frac{2\delta+\eta}{\al}\big)} \Big) <\infty;
$$
thus, applying the Borel-Cantelli Lemma, one gets (\ref{eq0:majmaxrjl}).
\end{proof}

\begin{Rem}
\label{rem:unif-event}
Our proofs of Lemmas~\ref{prop:minmaxgjl}~and~\ref{prop:majmaxrjl}, only allow to derive that Relations (\ref{eq0:minmaxgjl})~and~(\ref{eq0:majmaxrjl}) hold on 
some event of probability $1$, denoted by $\widetilde{\Omega}_\delta$, since it a priori depends on $\delta\in (0,1/3)$. Yet, one can easily show that these two relations
also hold, for every real number $\delta\in (0,1/3)$, on an event of probability $1$ which does not depend on $\delta$, namely the event $\bigcap_{\delta\in\Q\cap
(0,1/3)} \widetilde{\Omega}_\delta$.
\end{Rem}

Now, we are in position to prove Relation (\ref{eq:mainlb}).\\
\noindent
Assume that $\epsilon$ is a fixed arbitrarily small positive real number and that $\delta\in (0,1/3)$ is such that,
\begin{equation}
\label{eq1:mainlb}
\epsilon/2=2\delta/\alpha.
\end{equation}
Next observe that (\ref{eq1:mainlb}), (\ref{eq:wavc-D}), (\ref{rel:d:GR}) and the triangle inequality, imply that for all $j\in\Z_+$,
\begin{eqnarray*}
&& 2^{j(H-1/\al +\epsilon/2)} D_j\ge 2^{j\frac{2\delta}{\alpha}} \max_{0\leq l < [2^{j(1-\delta)}]} |G_{j,le_j}+R_{j,lej}| \\
&& \geq  2^{j\frac{2\delta}{\alpha}}\max_{0\leq l < [2^{j(1-\delta)}]} |G_{j,le_j}| - 2^{j\frac{2\delta}{\alpha}}\max_{0\leq l < [2^{j(1-\delta)}]} |R_{j,le_j}|;
\end{eqnarray*}
therefore, one has that,
\begin{eqnarray}
\label{eq2:mainlb}
&& \liminf_{j\rightarrow +\infty}\left\{2^{j(H-1/\al +\epsilon/2)} D_j\right\}\\
&& \ge \liminf_{j\rightarrow +\infty}\left\{2^{j\frac{2\delta}{\alpha}}\max_{0\leq l < [2^{j(1-\delta)}]} |G_{j,le_j}|\right\}
-\limsup_{j\rightarrow +\infty}\left\{2^{j\frac{2\delta}{\alpha}}\max_{0\leq l < [2^{j(1-\delta)}]} |R_{j,le_j}|\right\}.\nonumber
\end{eqnarray}
Finally putting together, (\ref{eq0:minmaxgjl}), (\ref{eq0:majmaxrjl}), (\ref{eq2:mainlb}) and (\ref{eq1:mainlb}), one gets (\ref{eq:mainlb}).

\subsection{Proof of Theorem~\ref{Th:main}}

Relations (\ref{eq:mainub}) and (\ref{eq:mainlb}) imply that there is $\Omega^*$ an event of probability $1$ such that each $\omega\in\Omega^*$ satisfies 
the following property: for all arbitrarily small $\epsilon >0$, there are two finite positive constants 
$A=A(\omega,\epsilon)$ and $B=B(\omega,\epsilon)$, and there exists $j_2=j_2(\omega,\epsilon)\in\Z_+$, such that, one has for all integer $j\ge j_2$,
$$ 
A 2^{-j(H-1/\al+\epsilon)}\le D_j(\omega)\le B 2^{-j(H-1/\al-\epsilon)}.
$$
This entails that,
$$
-H+1/\al-\epsilon \le \liminf_{j\rightarrow +\infty}\left\{\frac{\log(D_j(\omega))}{j\log(2)}\right\}\le \limsup_{j\rightarrow +\infty}\left\{\frac{\log(D_j(\omega))}{j\log(2)}\right\}\le 
-H+1/\al+\epsilon.
$$
Then letting $\epsilon$ goes to zero, one gets that,
$$
 \lim_{j\rightarrow +\infty}\left\{\frac{\log(D_j(\omega))}{j\log(2)}\right\}=-H+1/\al.
$$
$\Box$ 
\newpage
\noindent
%\textbf{Acknowledgements}
\\
\\
%The authors would like to thank the anonymous referee for his comments which allowed them to improve the earlier version of this paper.

\bibliographystyle{plain}
\bibliography{mabiblio}
\end{document}